\documentclass{article}

\usepackage{lineno,hyperref}
\usepackage{amssymb,amsmath, amsfonts, amsthm}

\usepackage{graphicx}
\usepackage{pgfplots}
\usepackage{tikz}
\usepackage{booktabs}
\usepackage{array}
\usepackage{verbatim}
\usepackage{subfig}
\usepackage{geometry}
\usepackage[makeroom]{cancel}
\usepackage{fancyhdr}

\newcommand{\vertex}{\node[vertex]}
\tikzstyle{vertex}=[circle, draw, inner sep=0pt, minimum size=6pt]
\newtheorem{theorem}{Theorem}

\newtheorem{corollary}{Corollary}

\modulolinenumbers[5]

\begin{document}

\title{Diamond-free, claw-free cubic graphs are $(1, 1, 2, 3)$-packing colorable}
\author{Sarah E. Anderson$^a$ \and Kirsti Kuenzel$^b$ \and Juan D. Marcano Cuellar$^b$}

\date{}

\maketitle

\begin{center}
$^a$ Department of Mathematics, University of St. Thomas, St. Paul, Minnesota, USA\\
$^b$ Department of Mathematics, Trinity College, Hartford, CT, USA\\
\end{center}
\vskip15mm

\begin{abstract}
A $(1, 1, 2, k)$-packing coloring of a graph $G$ is a partition of $V(G)$ into two independent sets, a 2-packing, and a $k$-packing. Recently, the question was posed in 
[A short proof that every claw-free cubic graph is (1, 1, 2, 2)-packing colorable, arXiv:2512.24001v1] as to whether every claw-free cubic graph is $(1, 1, 2, 3)$-packing colorable. We provide an answer in the affirmative in the case that $G$ is a diamond-free, claw-free cubic graph.

\end{abstract}
{\small \textbf{Keywords:}} packing coloring, cubic graphs, claw-free graphs \\
\noindent {\small \textbf{AMS subject classification:} } 05C15, 05C12

\section{Introduction} \label{sec:intro}
Given a non-decreasing sequence $S= (a_1, \dots, a_r)$ where each $a_i$ is a positive integer for $i \in [r]$, a function $f:V(G) \to [r]$ is an \emph{$S$-packing coloring} of $G$ if for every two vertices $u, v \in V(G)$ with $f(u) = f(v)$, the distance between $u$ and $v$ is greater than $a_i$. Gastineau and Togni \cite{gt-2016} posed the question of whether the subdivision of every subcubic graph admits a $(1, 2, 3, 4, 5)$-packing coloring. It was later shown that this is equivalent to asking whether every subcubic graph, other than the Petersen graph, is $(1, 1, 2, 2)$-packing colorable. Many papers provide partial results in an effort to answer this question. Some recent work has focused on $i$-saturated subcubic graphs, which is a way to classify graphs based on adjacencies among degree three vertices. A subcubic graph $G$ is called $2$-saturated if every vertex of degree $3$ in $G$ is adjacent to at most two other vertices of degree $3$. Mortada and Togni \cite{mt-20242} proved that every $2$-saturated subcubic graph is $(1, 1, 2, 3)$-packing colorable. A vertex of degree $3$ is heavy if all the vertices it is adjacent to have degree $3$. A subcubic graph $G$ is called $(3,0)$-saturated graph if every heavy vertex is not adjacent to another heavy vertex. Recently, it was proven in \cite{mz-2026} that every $(3, 0)$-saturated subcubic graph is $(1, 1, 2, 3)$-packing colorable. Additionally, Yang and Wu \cite{YW-2023} proved that every 3-irregular subcubic graph is $(1, 1, 3)$-packing colorable. Tarhini and Togni  \cite{tt-2024} showed that every cubic Halin graph is $(1, 1, 2, 3)$-packing colorable. Liu et al. \cite{liu-2020} proved that subcubic graphs with maximum average degree less that $\frac{30}{11}$ are ($1, 1, 2, 2)$-packing colorable. A major step forward was made by El Zein and Mortada in \cite{EM-2025} that every nonregular subcubic graph is $(1, 1, 2, 2)$-colorable. All that remains is to show that all cubic graphs, other than the Petersen graph, are indeed $(1, 1, 2, 2)$-colorable. 

In \cite{BKR-2024} Bre\v{s}ar et al. proved that every claw-free cubic graph is $(1, 1, 2, 2)$-packing colorable. Then Mortada and El Zein \cite{MZ-2026a} provided an easier proof that all claw-free cubic graphs are $(1, 1, 2, 2$)-packing colorable. In that same paper, they posed the question of whether every claw-free cubic graph is $(1, 1, 2, 3)$-packing colorable. In this  paper, we prove that every diamond-free, claw-free cubic graph is indeed $(1, 1, 2, 3)$-packing colorable.

\subsection{Definitions and Known Results}
Given a graph $G = (V(G), E(G))$, we write $n(G)$ to denote $|V(G)|$. For any $v \in V(G)$, the \emph{open neighborhood} of $v$ is defined to be $N_G(v) = \{u: uv \in E(G)\}$ (or simply $N(v)$ when the context is clear). The \emph{closed neighborhood} of $v$ is $N_G[v] = \{v\} \cup N_G(v)$ (or simply $N[v]$ when the context is clear). The \emph{degree} of $v$ is defined as $\deg_G(v) = |N_G(v)|$. A graph $G$ is $k$-regular if every vertex has degree $k$. Specifically, we say $G$ is cubic if it is 3-regular. Given a pair $u, v\in V(G)$, the \emph{distance} between $u$ and $v$, denoted ${\rm dist}(u,v)$, is the minimum length among all paths from $u$ to $v$. A set $M\subseteq E(G)$ is a \emph{matching} in $G$ if no pair of edges in $M$ share a common vertex. An edge $e \in E(G)$ is referred to as \emph{bridge} if $G$ is connected and $G-e$ is disconnected. $G$ is said to be \emph{2-edge-connected} if it contains no bridges. The graph $K_{1, 3}$ is referred to as a \emph{claw}, and $G$ is said to be \emph{claw-free} if there is no induced subgraph isomorphic to $K_{1, 3}$. A set $A \subseteq V(G)$ is an \emph{i-packing} if for each pair $u,v\in A$, ${\rm dist}(u,v) > i$. A $1$-packing is more  traditionally called an independent set. A $(1, 1, 2, 3)$-packing coloring is a partition of $V(G)=A\cup B \cup C \cup D$ such that $A$ is an independent set in $G$, $B$ is an independent set in $G$, $C$ is a 2-packing in $G$, and $D$ is a $3$-packing in $G$. $F$ is a \emph{2-factor} in $G$ if $F$ is a 2-regular subgraph of $G$. 

We refer to a {\it diamond} as the graph $D$ obtained from the complete graph $K_4$ by deleting an edge. Note that in what follows, a {\it string of diamonds} is defined to be a maximal sequence $D_1, D_2, \dots, D_k$ of diamonds (which are isomorphic to $K_4-e$) in which, for each $i \in [k-1]$, $D_i$ has a vertex adjacent to a vertex in $D_{i+1}$ and has exactly two vertices which have degree two. Furthermore, a {\it ring of diamonds} is defined to be any connected claw-free cubic graph in which every vertex is in a diamond. Given a diamond, we refer to the two vertices of degree $3$ as the \emph{internal vertices} of the diamond, and the remaining two vertices as the \emph{external vertices} of the diamond.

\begin{theorem}\cite{Oum}\label{thm:2factor} A graph $G$ is $2$-edge-connected claw-free cubic if and only if either
\begin{enumerate}
\item[(i)] $G \cong K_4$,
\item[(ii)] $G$ is a ring of diamonds, or
\item[(iii)] $G$ can be built from a $2$-edge-connected cubic multigraph $H$ by replacing some edges of $H$ with strings of diamonds and replacing each vertex of $H$ with a triangle.
\end{enumerate}
\end{theorem}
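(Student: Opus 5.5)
This is a structure theorem cited from \cite{Oum}, so the paper will likely not reprove it. If I were to prove it, I would treat the two directions separately and put essentially all the work into the forward direction.

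\textbf{Backward direction.} This is a routine check. $K_4$ is 2-edge-connected, claw-free and cubic. A ring of diamonds is cubic by definition and is 2-edge-connected, since it is a cycle of diamonds joined by edges. For (iii), first note that replacing each vertex of a cubic multigraph $H$ by a triangle yields a cubic graph. Every new vertex lies in a triangle together with two of its neighbours, so it is not the center of a claw. Every vertex of an inserted string of diamonds is also in a triangle with two of its neighbours. Finally, bridges correspond to bridges of $H$, since each diamond and each triangle is itself 2-edge-connected. Since $H$ has no bridges, neither does the resulting graph.

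\textbf{Forward direction.} Let $G$ be a connected, 2-edge-connected, claw-free cubic graph. The first step is the local observation that every vertex $v$ lies in a triangle. Indeed, claw-freeness forces some edge among the three neighbours of $v$.

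Next I classify the structure of the triangles.
\begin{itemize}
\item If a vertex lies in three triangles, then $N[v]$ induces $K_4$, and by connectivity $G\cong K_4$.
\item Otherwise two triangles sharing an edge form a diamond. The two internal vertices of a diamond already have degree 3. Each external vertex has exactly one further neighbour. The two external vertices cannot be adjacent, since that again gives $K_4$.
\end{itemize}
So the triangles are of two kinds: a triangle lying in a diamond, or a vertex-disjoint triangle sharing no edge with another triangle. In the latter case each vertex of the triangle has exactly one outside neighbour.

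\textbf{Case: every vertex is in a diamond.} Following the unique outside edges from external vertices, the diamonds chain into a cycle. This is a ring of diamonds, giving (ii).

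\textbf{Case: some vertex lies in a lone triangle.} I would argue that the vertices of $G$ are partitioned into diamonds and lone triangles. The main obstacle is this partition step: I must rule out a triangle that overlaps a diamond other than by being one of its two triangles. The degree count settles this, since the internal vertices of a diamond are saturated and each external vertex has only one free edge.

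Then I contract each lone triangle to a single vertex. Each maximal string of diamonds joins two triangle-vertices (possibly the same one), and I replace the string by a single edge. This produces a cubic multigraph $H$, possibly with loops or parallel edges. A loop would create a bridge in $G$, and in general a bridge of $H$ gives a bridge of $G$. Hence $H$ is 2-edge-connected, and $G$ has exactly the form described in (iii).
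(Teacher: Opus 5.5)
The paper gives no proof of this statement (it is quoted from Oum and used as a black box). Your sketch is correct and follows the standard argument:
\begin{enumerate}
\item every vertex lies in a triangle;
\item degree counting shows that, outside the $K_4$ case, triangles either pair into vertex-disjoint diamonds or are vertex-disjoint lone triangles;
\item contracting lone triangles and replacing strings of diamonds by edges yields a $2$-edge-connected cubic multigraph $H$.
\end{enumerate}
One step is worth writing out. Once a lone triangle exists, a closed chain of diamonds would form an entire component of $G$, so connectivity guarantees that every maximal string of diamonds ends at lone triangles (possibly the same triangle at both ends, which your loop-implies-bridge argument then rules out).
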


In this paper, we will consider $2$-edge-connected, claw-free cubic graphs. Note that Petersen \cite{Petersen} was the first to show that every $2$-edge-connected cubic multigraph has a $2$-factor. When a cubic graph $G$ contains a $2$-factor $\mathcal{C} = C_1 \cup \cdots \cup C_k$, we will write that $G$ has a $(\mathcal{C}, M)$-decomposition where $M$ is the perfect matching in $G$ containing all remaining edges in $G$ that are not on $C_i$ for any $i\in[k]$. 

We also utilize the following result due to Plesn\'{i}k.

\begin{theorem}\cite{Plesnik}\label{thm:Plesnik} Let $G$ be an $(r-1)$-edge-connected regular graph of degree $r>0$ where $|V(G)|$ is even and let $H$ be an arbitrary set of $r-1$ edges. The graph $G' = G - H$ has a $1$-factor.
\end{theorem}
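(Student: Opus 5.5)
The natural route to Plesník's theorem (Theorem~\ref{thm:Plesnik}) is Tutte's $1$-factor theorem. Suppose $G' = G - H$ has no $1$-factor. Then there is a set $S \subseteq V(G)$ such that $G'-S$ has more than $|S|$ odd components. Since $|V(G)|$ is even, a parity argument shows that the number of odd components of $G'-S$, call it $q$, has the same parity as $|S|$. Hence $q \ge |S|+2$.

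The plan is to double count the edges of $G$ leaving the odd components of $G'-S$. For each such component $K$, consider $\partial_G(K)$, the set of edges of $G$ with exactly one end in $K$.
\begin{enumerate}
\item[(a)] Because $r|K| - 2|E(G[K])| = |\partial_G(K)|$ and $|K|$ is odd, $|\partial_G(K)| \equiv r \pmod 2$.
\item[(b)] By $(r-1)$-edge-connectivity, $|\partial_G(K)| \ge r-1$. Combined with (a), this gives $|\partial_G(K)| \ge r$ whenever $K \ne V(G)$.
\end{enumerate}
If $S=\emptyset$, then $q \ge 2$ and in particular $K \ne V(G)$. In every case the components are proper subsets, so each odd component sends at least $r$ edges of $G$ out of it.

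Every edge of $\partial_G(K)$ either goes to $S$ or lies in $H$, since $K$ is a component of $G-H-S$. Let $h_K$ be the number of edges of $H$ in $\partial_G(K)$. Then $K$ sends at least $r - h_K$ edges to $S$. An edge of $H$ can lie in $\partial_G(K)$ for at most two components $K$, so $\sum_K h_K \le 2|H| = 2(r-1)$. On the other hand, the total number of edges of $G$ entering $S$ is at most $r|S|$. Combining these,
\[ r|S| \;\ge\; rq - 2(r-1) \;\ge\; r(|S|+2) - 2r + 2 \;=\; r|S| + 2, \]
which is a contradiction.

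The main obstacle I expect is the case where an edge of $H$ joins two odd components, or has an endpoint in $S$. To keep the counting honest, I would count edges of $G$ into $S$ directly as $\sum_K (|\partial_G(K)| - h_K)$, where $h_K$ includes only the edges of $H$ whose other end lies outside $S$. Edges of $H$ that land in $S$ still count toward the degree of vertices in $S$, so the bound $r|S|$ remains valid. The slack of $2$ in the final inequality absorbs the doubled count. I would also double-check the small cases: $r=1$, where $H$ is empty and $G$ is itself a perfect matching, and $r=2$, where $G$ is a union of even cycles with one edge removed.
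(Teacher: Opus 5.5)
Your proof is correct. It is the standard Tutte-condition counting argument for Plesn\'{i}k's theorem; the paper gives no proof and only cites Plesn\'{i}k, so there is no alternative route to compare. The key steps all hold: parity plus $(r-1)$-edge-connectivity give every odd component of $G'-S$ at least $r$ boundary edges in $G$, at most $2|H|=2(r-1)$ of these boundary incidences can come from $H$, and so at least $rq-2(r-1)\ge r|S|+2$ edges would have to enter $S$, which has only $r|S|$ edge-ends available.

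Two cosmetic remarks. First, $K\neq V(G)$ is immediate because $|K|$ is odd and $|V(G)|$ is even. Second, in your $r=2$ sanity check, $1$-edge-connectivity forces $G$ to be a single even cycle rather than a union of cycles, so $G-H$ is a path on an even number of vertices.
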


Focusing on a $2$-edge-connected cubic graph $G$, the above says that given any two edges $\{e, f\} \subset E(G)$ there exists a $2$-factor that contains the edges $e$ and $f$. Alternatively, and the way that we will use this result in the next section, is given an $e \in E(G)$, there exists a perfect matching $M$ in $G$ containing $e$. Since $E(G) - M$ is a $2$-factor in $G$, we will also say that given any $e \in E(G)$, there exists a $(\mathcal{C}, M)$-decomposition of $G$ where $e \in M$. Putting this together with Theorem~\ref{thm:2factor}, if $G$ is a $2$-edge-connected, diamond-free, claw-free cubic graph of order at least 6, then $G$ can be built from a $2$-edge-connected cubic multigraph $H$ by replacing each vertex of $H$ with a triangle. Therefore, when we refer to a $(\mathcal{C}, M)$-decomposition of $G$, we may assume that when we write $\mathcal{C} = C_1 \cup \cdots \cup C_k$ and we enumerate the vertices of $C_i$ as $x_1^ix_2^i\dots x_{n_i}^1$ for $i\in [k]$, we can begin the indexing so that for each $j \equiv 2 \pmod{3}$, $x_{j-1}^ix_j^ix_{j+1}^i$ induces a triangle in $G$. 


\section{Main Result}
We will focus on diamond-free, claw-free cubic graphs throughout the paper. We first focus on when such a graph is $2$-edge-connected.

\begin{theorem}\label{thm:2connected} If $G$ is a $2$-edge-connected diamond-free, claw-free cubic graph, then $G$ is $(1, 1, 2, 3)$-packing colorable. Moreover,  for any fixed triangle $uvw$ in $G$ with $(\mathcal{C}, M)$-decomposition of $G$ where $uw \in M$, we can find a $(1,1,2,3)$-packing coloring such that  $v$ is assigned the color $3$ and $u$ and $w$ are assigned the colors $1A$ or $1B$.
\end{theorem}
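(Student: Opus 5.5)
The plan is a direct construction from a $(\mathcal{C},M)$-decomposition rather than a reduction. As noted after Theorem~\ref{thm:Plesnik}, $G$ is obtained from a $2$-edge-connected cubic multigraph $H$ by replacing every vertex with a triangle ($K_4$ and rings of diamonds contain diamonds, so they are excluded). Hence the triangles partition $V(G)$, and every non-triangle edge is an edge of $H$. Fix the triangle $uvw$ and, by Theorem~\ref{thm:Plesnik}, choose a $(\mathcal{C},M)$-decomposition with $uw\in M$. Index each cycle $C_i=x_1^i\cdots x_{n_i}^i$ as in the paper, so that each $x^i_{j-1}x^i_jx^i_{j+1}$ with $j\equiv 2\pmod 3$ is a triangle whose chord $x^i_{j-1}x^i_{j+1}$ lies in $M$. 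Call $x^i_j$ the \emph{middle} vertex of its triangle and the other two its \emph{ends}. Every end is incident with exactly two triangle edges and one $H$-edge lying on its cycle, while every middle vertex is matched by $M$ to the middle vertex of another triangle (its \emph{partner}). We may take $v$ to be the middle vertex of its triangle.

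\textbf{Step 1 (ends).} Along each cycle, give $x^i_{j-1}$ the color $1A$ and $x^i_{j+1}$ the color $1B$ for every triangle. Consecutive triangles meet along the cycle in the pattern $\dots,1B \mid 1A,\dots$, including around the wrap-around $x^i_{n_i}x^i_1$, and each chord joins a $1A$ end to a $1B$ end. So $A$ and $B$ are independent, and $u,w$ receive $1A,1B$ as required.

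\textbf{Step 2 (middles).} The middle vertices now induce a perfect matching (partner pairs), and each middle vertex has one neighbor colored $1A$ and one colored $1B$. Two middle vertices that are not partners share no neighbor: the neighbors of a middle vertex are its two ends and its partner, and the other neighbors of the partner are ends of a different triangle. Hence any set of middle vertices containing no partner pair is a $2$-packing. Middle vertices at distance exactly $3$ arise only from triangles that are consecutive on a cycle (via $x_j,x_{j+1},x_{j+2},x_{j+3}$) or from partner-adjacent configurations; these can be enumerated explicitly. The task therefore reduces to choosing, in every partner pair, one vertex for color $3$ and the other for color $2$, such that no two vertices colored $3$ lie at distance at most $3$, and such that $v$ is colored $3$.

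\textbf{Main obstacle.} The difficulty is Step 2, since consecutive triangles on a cycle forbid both their middles from being colored $3$. I would first try to resolve conflicts locally by recoloring ends. In a triangle whose middle should not receive $3$ (or $2$), rotate the colors inside the triangle: the middle takes $1A$ or $1B$, and one end takes $2$. This is allowed when that end's cycle-neighbor and its whole second neighborhood avoid color $2$, which holds if we only perform the swap at triangles whose neighboring triangles' middles are colored $3$. With this flexibility, processing each cycle around $C_i$ should let us alternate the pattern "middle $3$" / "middle $2$" / "rotated" with period dividing $3$ or $2$. Parity issues from odd numbers of triangles on a cycle would be absorbed by one extra rotation, and the start of this process is anchored at the triangle $uvw$ so that $v$ keeps color $3$. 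If the cross-cycle constraints coming from partner pairs make this greedy procedure fail, the fallback is induction on $|V(H)|$: delete a suitable edge of $H$ and suppress its endpoints, keeping $2$-edge-connectivity. We then apply the ``moreover'' clause to a triangle adjacent to the deleted edge, so that the color $3$ vertex is placed where it is needed when reinserting the two triangles. This is exactly why the statement carries the strengthened hypothesis.
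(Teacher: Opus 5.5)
Your Step~1 and the observation that non-partner middles are at distance at least $3$ are correct. However, there is a genuine gap exactly at the ``main obstacle,'' and it is not a marginal case.

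In your reduced problem, partners are adjacent and middles of consecutive triangles on a cycle are at distance $3$. So the triangles whose middle gets $3$ form an independent set $S$ of $H$ containing exactly one end of each edge of $M'$, and hence $|S|=|V(H)|/2$. In a cubic graph, $3|S|=|E(H)|$ then forces every edge to have exactly one end in $S$, i.e.\ $H$ must be bipartite. Already $H=K_4$ fails (so $G$ is the truncated tetrahedron): with $2$-factor the $4$-cycle $1234$ and $M'=\{13,24\}$, any choice of one vertex from $\{1,3\}$ and one from $\{2,4\}$ is an edge of the cycle.

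So the rotations carry the entire proof, and for them you only offer ``should.'' Worse, the rotation is not a local move. If you rotate a triangle $T_j$ whose middle was meant to get $3$, its partner $v_{j'}$ still has color $2$ and sits at distance $2$ from the new $2$-end via $u_jv_jv_{j'}$ (or $w_jv_jv_{j'}$). Changes therefore propagate along $M$ into other cycles, which is precisely the cross-cycle constraint your period/parity heuristic ignores. The precoloring of ends in Step~1 also does not survive rotations: two partner triangles both rotated at their $u$-ends force both middles to $1A$. Finally, the fallback induction is only named, with no control of $2$-edge-connectivity after suppressing and no reinsertion argument.

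The missing idea is to abandon the partner pairing and let each non-$3$ triangle place its $2$ on \emph{whichever} vertex has its outside neighbor on a $3$-triangle. That is the middle if the partner triangle is a $3$-triangle, and an end if a cycle-neighboring triangle is. The paper's proof maintains exactly these invariants:
\begin{enumerate}
\item[(a)] each triangle has exactly one vertex colored $2$ or $3$;
\item[(b)] $3$ occurs only on middles of pairwise nonadjacent triangles of $H$, which gives a $3$-packing since middles are matched to middles;
\item[(c)] every $2$-vertex has its outside neighbor on a $3$-triangle, which together with (a) makes the $2$-vertices a $2$-packing;
\item[(d)] the still uncolored vertices induce paths and even cycles, so $1A/1B$ is assigned last.
\end{enumerate}
Thus one only needs a family of $3$-triangles that is independent in $H$, dominates all other triangles, and contains $uvw$. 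In effect this is a maximal independent set of $H$ through the vertex replaced by $uvw$.

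The paper builds this set in two stages. Stage~1 takes a maximal independent set in each $H_i$, which absorbs the partners lying on the same cycle. Stage~2 processes the cycles in BFS order: first give $2$ to middles whose partner already has $3$, then go around the cycle. If the left neighbor is a $3$-triangle, $u$ gets color $2$; else if the right neighbor is, $w$ gets color $2$; else $v$ gets $3$.
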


\begin{proof} As $K_4$ is clearly $(1, 1, 2, 3)$-colorable, we may assume that $G$ has order at least 6. Fix the triangle $uvw$ and let $e$ be the edge in $G$ incident to $v$, but not incident to $u$ or $w$. By Theorem~\ref{thm:Plesnik}, there exists a $(\mathcal{C}, M)$-decomposition of $G$ where $e \in M$. We proceed to assign exactly one vertex in each triangle in $G$ the color $2$ or $3$. If we assign a vertex of a triangle the color 2, we refer to it as a 2-triangle. If we assign a vertex of a triangle the color 3, we refer to it as a 3-triangle. Furthermore, any vertex assigned the color 3 (resp., 2) is referred to as a 3-vertex (resp., 2-vertex). 

Note that since $G$ is obtained by taking a claw-free multigraph $H$ and replacing each vertex in $H$ with a triangle, we can view our $(\mathcal{C}, M)$-decomposition of $G$ as being obtained from a $(\mathcal{C}', M')$-decomposition of $H$ as follows. Write $\mathcal{C}' = C_1' \cup \cdots \cup C_k'$ where $C_i' = h_1^ih_2^i\dots h_{m_i}^i$ and define $C_i = u_1^iv_1^iw_1^iu_2^iv_2^iw_2^i\dots u_{m_i}^iv_{m_i}^iw_{m_i}^i$ such that triangle $u_j^iv_j^iw_j^i$ replaced vertex $h_j^i$ in $H$. Moreover, $u_j^iw_j^i\in M$, $\mathcal{C}= C_1 \cup \cdots \cup C_k$ and each edge $h_k^ih_{\ell}^j \in M'$ is replaced with $v_k^iv_{\ell}^j$. We may reindex if necessary so that $v = v_1^1$. See Figure~\ref{fig:thm1} for an example of how to label the vertices of $G$ based on a labeling in $H$. In addition, for the rest of the paper, we will refer to the left and right triangle neighbor of a triangle. Consider a triangle, $u_{j}^iv_j^iw_{j}^i$, which we will call $T_{j}^i$. The left triangle neighbor of $T_{j}^i$  is the triangle $u_{\ell}^kv_{\ell}^kw_{\ell}^k$ that is different than $T_j^i$ and $u_{j}^i$ is adjacent to $w_{\ell}^k$. The right triangle neighbor of $T_{j}^i$ is the triangle $u_{r}^sv_{r}^sw_{r}^s$ that is different than $T_j^i$ and $w_{j}^i$ is adjacent to $u_{r}^s$.

We associate with this $(\mathcal{C}, M)$-decomposition a graph $\widetilde{G}$ with vertices $g_1, \dots, g_k$ such that $g_ig_j \in E(\widetilde{G})$ if and only if there is an edge between a vertex of $C_i$ and a vertex of $C_j$ in $G$. Choose a spanning tree $T$ of $\widetilde{G}$ rooted at $g_1$. Define $A_i = \{g_j \in T: \text{dist}_T(g_1, g_j) = i\}$ for $i \ge 1$.  Reindexing if necessary, we label the vertices in $\widetilde{G}$ according to a BFS-labeling of the vertices in $T$ rooted at $g_1$. That is, if $|A_1| = r_1$, then $A_1 = \{g_2, \dots, g_{r_1+1}\}$, etc. We also reindex the cycles $C_2, \dots, C_k$ based on the indexing of vertices in $T$. We proceed with a two stage process for coloring the vertices in $G$.  Note that once one of the vertices on a triangle is assigned the color $2$ or $3$, then we move on to the next triangle, and no other vertex on that triangle is assigned the color $2$ or $3$.

\vskip5mm
\noindent\textbf{Stage 1}\vskip5mm
First, we color some of the vertices of $C_1$ as follows. Let $H_1$ be the subgraph of $H$ induced by those vertices on $C_1'$ that have all three neighbors on $C_1'$ together with $h_1^1$. Note that $H_1$ is subcubic. Choose a maximal independent set $I$ from $H_1$ that contains $h_1^1$. It follows that $H_1 - I$ is a graph with maximum degree two so that $H_1 - I$ is a disjoint union of isolates, paths, and cycles. Let $\widetilde{H_1}$ be the graph induced by the vertices in $G$ that are on $C_1$ and replaced a vertex in $H_1 - I$. Then assign colors to some of the vertices of $C_1$ in order as follows.
\begin{itemize}
\item For each triangle in $G$ of the form $u_j^1v_j^1w_j^1$ that replaced $h_j^1 \in I$, assign $v_j^1$ the color $3$ so that $u_j^1v_j^1w_j^1$ is a $3$-triangle.
\item For each component of $\widetilde{H_1}$ built from an isolate in $H_1-I$, assign each vertex of the form $v_j^1$ the color $2$ so that $u_j^1v_j^1w_j^1$ is a $2$-triangle. Note that based on the choice of $I$, $v_{j'}^1 \in N_G(v_j^1) - \{u_j^1, w_j^1\}$ has already received color $3$.
\item Consider the components of $\widetilde{H_1}$ built from a cycle in $H_1 - I$. We can view such a component $J$ as a subcubic graph where every triangle contains exactly one vertex with degree two in $J$. Consider the triangle $u_j^1v_j^1w_j^1$ in $J$, which we will call $T_j^1$. Assume $a \in \{u_j^1, v_j^1, w_j^1\}$ is the vertex of degree $2$ on $T_j^1$, and $a$ is adjacent to a vertex in another triangle $u_i^1v_i^1w_i^1$. We claim $u_i^1v_i^1w_i^1$ is a $3$-triangle. If not, then this would imply the vertex $h_j^1$ has degree three in $H_1-I$, meaning $I \cup \{h_j^1\}$ is a larger independent set in $H_1$, which is a contradiction. Then assign $a$ the color $2$, and  $u_j^1v_j^1w_j^1$ is a $2$-triangle. Note that from above if $a$ is $v_j^1$, then the third neighbor of $v_j^1$ in $G$ has already been assigned the color $3$. If $a$ is $u_j^1$, then the left triangle neighbor of $T_j^i$ is $u_{j - 1}^1v_{j - 1}^1w_{j - 1}^1$ and is a $3$-triangle. Thus, $v_{j-1}^1$ has been assigned the color $3$. Similarly, if $a$ is $w_j^1$, then the right triangle neighbor of $T_j^i$ is $u_{j + 1}^1v_{j + 1}^1w_{j + 1}^1$ or ($u_1^1v_1^1w_1^1$ if $j = m_1$) and is a $3$-triangle. Thus, $v_{j+1}^1$ (or $v_1^1$) has been assigned the color $3$.
\item All that remains are the components of $\widetilde{H_1}$ built from a path in $H_1 - I$. We can view such a component $J$ as a subcubic graph where every triangle contains at least one vertex with degree two in $J$. Consider the triangle $u_j^1v_j^1w_j^1$ in $J$, which we will call $T_j^1$. Assume $a \in \{u_j^1, v_j^1, w_j^1\}$ is a vertex of degree $2$ on $T_j^1$, and $a$ is adjacent to a vertex in another triangle $u_i^1v_i^1w_i^1$. As above,  $u_i^1v_i^1w_i^1$ is a $3$-triangle. If $v_j^1$ has degree two in $J$, then the third neighbor of $v_j^1$ in $G$ has already been assigned the color $3$ from above. Assign $v_j^1$ the color $2$. If $v_j^1$ does not have degree two in $J$, then $u_j^1$ or $w_j^1$ (or both) have degree $2$ in $J$. If $u_j^1$ has degree two in $J$, assign $u_j^1$ the color $2$. As above, the left triangle neighbor of $T_j^1$ is $u_{j - 1}^1v_{j - 1}^1w_{j - 1}^1$ and is a $3$-triangle, and $v_{j-1}^1$ has been assigned the color $3$. If $v_j^1$ and $u_j^1$ both have degree $3$ in $J$, then $w_j^1$ has degree $2$ in $J$. Assign $w_j^1$ the color $2$. Similarly, if $a$ is $w_j^1$, then the right triangle neighbor of $T_j^1$ is  $u_{j + 1}^1v_{j + 1}^1w_{j + 1}^1$ (or $u_1^1v_1^1w_1^1$ if $j = m_1$) and  is a $3$-triangle. Hence, $v_{j+1}^1$ (or $v_1^1$) has been assigned the color $3$. 
\end{itemize}

Note that for each $h_j^1 \in H_1$, each triangle $T_j^1$ in $G$ of the form $u_j^1v_j^1w_j^1$ that replaced $h_j^1$ has either been assigned the color $2$ or $3$. Since $I$ was chosen to be an independent set in $H_1$, no $3$-triangles are adjacent. Thus, no 3-vertex on $C_1$ is within distance three of another 3-vertex on $C_1$. Moreover, if $u_j^1$ is assigned the color 2, then from above the left triangle neighbor of $T_j^1$ is a $3$-triangle. Similarly, if $w_j^1$ is assigned the color 2, then the right triangle neighbor of $T_j^1$ is a $3$-triangle, and if $v_j^1$ is assigned the color $2$, then the other triangle it is adjacent to is also a $3$-triangle.

Next, for each $2 \le i \le k$, we color some of the vertices of $C_i$ in a very similar way, only we let $H_i$ be the subgraph of $H$ induced by those vertices on $C_i'$ that have all three neighbors on $C_i'$ (and do not include $h_1^i$). We pick a maximal independent set $I$ of $H_i$. All the rest of the coloring process from above is the same. Note that in the case when $u_1^i$ is assigned the color $2$, then the left triangle neighbor of $T_j^i$ is $u_{m_i}^iv_{m_i}^iw_{m_i}^i$ and is a $3$-triangle.

\vskip5mm
\noindent\textbf{Stage 2}
\vskip5mm
We now proceed to finish coloring the vertices of $C_i$ for each $i\in [k]$. We begin with $C_1$. Let $T_{\alpha_1}^1, \dots , T_{\alpha_{t_1}}^1$ be those triangles with vertices on $C_1$ that are not yet 2- or 3-triangles such that if $i<j$ and $u_{\alpha_i}^1v_{\alpha_i}^1w_{\alpha_i}^1$ and $u_{\alpha_j}^1v_{\alpha_j}^1w_{\alpha_j}^1$ are the vertices of $T_{\alpha_i}^1$ and $T_{\alpha_j}^1$, respectively, then $\alpha_i<\alpha_j$. Note that $\alpha_1>1$ as we have already assigned $v_1^1$ the color $3$. 

For each $i\in [t_i]$:
\begin{itemize}
\item If the left triangle neighbor of $T_{\alpha_i}^1$ is a 3-triangle, then assign $u_{\alpha_i}^1$ the color 2.
\item else, if the right triangle neighbor of $T_{\alpha_i}^1$ is a 3-triangle, assign $w_{\alpha_i}^1$ the color 2, 
\item else, assign $v_{\alpha_i}^1$ the color $3$. 
\end{itemize}

Next, we finish coloring the vertices of each $C_i$ for $2 \le i \le k$. For $i\in \{2, \dots, k\}$:
\begin{itemize}
\item For each $v_j^i$ that is not on a 2- or 3-triangle yet and has a neighbor $v_{j'}^s$ not on $C_i$ that has been assigned the color 3, assign $v_j^i$ the color 2. 
\item Let $T_{\alpha_1}^i, \dots, T_{\alpha_{t_i}}^i$ be those triangles with vertices on $C_i$ that are not yet a 2- or 3-triangle. For each $j\in [t_i]$:
\begin{itemize}
\item If the left triangle neighbor of $T_{\alpha_j}^i$ is a 3-triangle, then assign $u_{\alpha_j}^i$ the color 2, 
\item else, if the right triangle neighbor of $T_{\alpha_j}^i$ is a 3-triangle, then assign $w_{\alpha_j}^i$ the color 2, 
\item else, assign $v_{\alpha_j}^i$ the color 3. 
\end{itemize}
\end{itemize}

Note that in all stages and cases, the following holds.
\begin{itemize}
\item The only vertices assigned the color $3$ are of the form $v_j^i$, and $v_j^i$ is only assigned the color $3$ if none of the triangles neighboring $T_j^i$ are $3$-triangles.
\item If $u_{j}^i$ is colored $2$, then the left triangle neighbor of $T_j^i$ is a $3$-triangle.
\item If $w_{j}^i$ is colored $2$, then the right triangle neighbor of $T_j^i$  is a $3$-triangle.
\item If $v_j^i$ is colored $2$, then the third neighbor of $v_j^i$ not on the triangle $T_j^i$ is colored $3$. Thus, $v_j^i$ is adjacent to a $3$-triangle.
\end{itemize}

Since the graph is finite, this process will eventually terminate, and at this point exactly one vertex from each triangle is assigned the color $2$ or $3$. The remaining vertices that are not assigned a color form an induced disjoint union of even length paths and even length cycles in $G$, and we can appropriately assign these vertices the colors $1A$ and $1B$.

We claim that this algorithm produces a $(1, 1, 2, 3)$-coloring. To see this, note that we only need to show that after Stage 2, the 2-vertices are a 2-packing in $G$ and the 3-vertices are a 3-packing in $G$ since the vertices colored $1A$ and $1B$ induce a bipartite graph in $G$.

First, consider the $3$-vertices. As mentioned above, the only vertices assigned the color $3$ are the form $v_j^i$, and $v_j^i$ is only assigned the color $3$ if none of the triangles neighboring $T_j^i$ are $3$-triangles. Thus, any two vertices that are color $3$ are distance at least $4$ apart, and the $3$-vertices are a $3$-packing.

Next, consider the $2$-vertices. Let $T_j^i$ be a 2-triangle with left triangle neighbor $T_{j-1}^i$ and right triangle neighbor $T_{j+1}^i$ (where $j-1$ and $j+1$ are taken to mean modulo $m_i$). In addition, we will assume $v_j^i$ is adjacent to a vertex on the triangle $T_r^s$. Note from above if a vertex is assigned the color $2$, then the other triangle it is adjacent to is a $3$-triangle. In addition, every triangle is assigned exactly one vertex of color $2$ or $3$.

First, assume $v_j^i$ is a $2$-vertex. Then $v_j^i$ was assigned the color $2$ because  $v_r^s$ and has color 3. Since each triangle has exactly one vertex of color 2 or 3, there is no 2-vertex on $C_s$ within distance two of $v_j^i$. Moreover, $w_{j-1}^i$ is not assigned color 2 as this would only have occurred in Stage 1 or Stage 2 where it was assumed $T_j^i$ was a 3-triangle. Similarly, $u_{j+1}^i$ is not a 2-vertex.

Next, assume $u_j^i$ is a 2-vertex. If $u_j^i$ is assigned the color 2 in Stage 1, then $v_{j-1}^i$ is color 3 based on our above discussion about $\widetilde{H_i}$, and $u_{j-1}^i$ and $w_{j-1}^i$ have been assigned 1A or 1B. Furthermore, $v_r^s$ is not assigned color 2 nor is $u_{j+1}^i$. So assume $u_j^i$ is assigned color 2 in Stage 2. Then $v_{j-1}^i$ is a 3-vertex, and $u_{j-1}^i$ and $w_{j-1}^i$ have been assigned 1A or 1B. If $j\ne m_i$, then either (1) $T_{j+1}^i$ is a 3-triangle so that $u_{j+1}^i$ is assigned 1A or 1B, or (2) $T_{j+1}^i$ became a 2-triangle before $T_j^i$ was assigned the color 2 and this only occurs when $v_{j+1}^i$ is assigned the color 2. On the other hand, if $j = m_i$, then either (1) $T_1^i$ is a 3-triangle so that $u_1^i$ is assigned 1A or 1B, or (2) $T_1^1$ is a 2-triangle and since $T_{m_i}^i$ was not yet a 2- or 3-triangle, then either $v_1^i$ or $w_1^i$ is assigned color 2. In either case, no 2-vertex in $G$ is within distance two of $u_j^i$. A similar argument shows no 2-vertex of the form $w_j^i$ is within distance two of another 2-vertex in $G$.

\end{proof}

\begin{figure}[h]
\begin{center}
\begin{tikzpicture}[]
\tikzstyle{vertex}=[circle, draw, inner sep=0pt, minimum size=6pt]
\tikzset{vertexStyle/.append style={rectangle}}
	\vertex (1) at (-9,0) [scale=.75, label=left:$1A$] {};
	\vertex (2) at (-9,1) [ scale=.75, label=left:$1B$] {};
	\vertex (3) at (-8, 2) [ scale=.75, label=above:$2$] {};
	\vertex (4) at (-7, 1) [scale=.75, label=right:$1A$] {};
	\vertex (5) at (-7, 0) [scale=.75, label=right:$1B$] {};
	\vertex (6) at (-8, -1) [scale=.75, label=below:$3$] {};
	\vertex (7) at (3, 0) [scale=.75, label=left:$1A$] {};
	\vertex (8) at (3, 1) [scale=.75, label=left:$1B$] {};
	\vertex (9) at (4, 2) [scale=.75, label=above:$3$] {};
	\vertex (10) at (5, 1) [scale=.75, label=right:$1A$] {};
	\vertex (11) at (5,0) [scale=.75, label=right:$1B$] {};
	\vertex (12) at (4, -1) [scale=.75, label=below:$2$] {};
	\vertex(13) at (-5, 0) [scale=.75, label=left:$1A$] {};
	\vertex(14) at (-3, -1) [scale=.75, label=below:$1B$] {};
	\vertex(15) at (-4.5, -1.5) [scale=.75, label=below:$2$] {};
	\vertex(16) at (-1, -1) [scale=.75, label = above: $\mathbf{w}$, label=below:$1A$] {};
	\vertex(17) at (1, 0) [scale=.75, label = 125: $\mathbf{u}$, label= right:$1B$] {};
	\vertex(18) at (.5, -1.5) [scale=.75, label = 93: $\mathbf{v}$, label=below:$3$] {};
	\vertex(19) at (-1, 2) [scale=.75, label=above:$2$] {};
	\vertex(20) at (1, 1) [scale=.75, label=right:$1A$] {};
	\vertex(21) at (.5, 2.5) [scale=.75, label=above:$1B$] {};
	\vertex(22) at (-5, 1) [scale=.75, label=left:$1B$] {};
	\vertex(23) at (-3, 2) [scale=.75, label=above:$1A$] {};
	\vertex(24) at (-4.5, 2.5) [scale=.75, label=above:$3$] {};
	
	\vertex(25) at (-8, 6) [scale=.75, label=left:$h_1^3$] {};
	\vertex(26) at (-8, 5) [scale=.75, label=left:$h_2^3$] {};
	\vertex(27) at (-4, 6) [scale=.75, label=above:$h_3^1$] {};
	\vertex(28) at (0, 6) [scale=.75, label=above:$h_4^1$] {};
	\vertex(29) at (0,5) [scale=.75, label=below:$h_1^1$] {};
	\vertex(30) at (-4,5) [scale=.75, label=left:$h_2^1$] {};
	\vertex(31) at (4, 6) [scale=.75, label=right:$h_1^2$] {};
	\vertex(32) at (4,5) [scale=.75, label=right:$h_2^2$] {};
	\node(A) at (-2, 7) []{$H$};
	\node(B) at (-2, -3.5)[]{$G$};

	\path
		(1) edge (2)
		(2) edge (3)
		(3) edge (4)
		(4) edge (5)
		(5) edge (6)
		(1) edge (6)
		(2) edge (4)
		(1) edge (5)
		(7) edge (8)
		(8) edge (9)
		(9) edge (10)
		(10) edge (11)
		(11) edge (12)
		(7) edge (12)
		(6) edge[bend right] (12)
		(7) edge (11)
		(8) edge (10)
		(13) edge (14)
		(13) edge (15)
		(14) edge (15)
		(14) edge (16)
		(16) edge (17)
		(17) edge (18)
		(16) edge (18)
		(19) edge (20)
		(20) edge (21)
		(19) edge (21)
		(17) edge (20)
		(22) edge (23)
		(23) edge (24)
		(22) edge (24)
		(22) edge (13)
		(23) edge (19)
		(3) edge[bend left] (24)
		(9) edge[bend right] (21)
		(15) edge[bend right] (18)
		(25) edge[bend left=20] (26)
		(25) edge[bend right=20] (26)
		(25) edge (27)
		(27) edge (28)
		(28) edge (29)
		(29) edge[bend left=20] (30)
		(29) edge[bend right=20] (30)
		(27) edge (30)
		(28) edge (31)
		(31) edge[bend left=20] (32)
		(31) edge[bend right=20] (32)
		(26) edge[bend right=20] (32)

	;
\end{tikzpicture}
\end{center}
\caption{An example of using Theorem~\ref{thm:2connected} to assign a $(1, 1, 2, 3)$-packing coloring to a $2$-edge-connected diamond-free, claw-free cubic graph. Note that $v$ has been assigned the color $3$.}
\label{fig:thm1}
\end{figure}

Figure~\ref{fig:thm1} provides an example of how the algorithm given in Theorem~\ref{thm:2connected} assigns colors to the vertices. We will now use the coloring from Theorem~\ref{thm:2connected} to give a coloring that requires a specific vertex be a certain color in the next two results. 

\begin{corollary}\label{cor:onedegree3} Let $G$ be a $2$-edge-connected diamond-free, claw-free cubic graph. If $uvw$ is a triangle in $G$, then there exists a $(1,1,2,3)$-packing coloring that assigns each of $w$ and $v$ a color from $\{1A, 1B\}$ and assigns $u$ the color 2.  In addition, there is at most one vertex within distance two from $v$ that has color $3$, and this $3$-vertex is the neighbor of $v$ that is not on the triangle $uvw$.  

\end{corollary}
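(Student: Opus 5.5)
The plan is to obtain the required coloring from Theorem~\ref{thm:2connected}, applied not at $uvw$ but at the triangle on the other side of $v$, and then to repair the coloring locally around $uvw$. If $G\cong K_4$ the statement can be checked by hand, so assume $n(G)\ge 6$. Then $G$ is built from a $2$-edge-connected cubic multigraph by replacing vertices with triangles. Let $x$ be the neighbor of $v$ not on $uvw$. Since $G$ is diamond-free, $x$ lies on a different triangle $xyz$. By Theorem~\ref{thm:Plesnik} there is a $(\mathcal{C},M)$-decomposition with $vx\in M$. The $2$-factor must then pass through $x$ along $yx$ and $xz$, so $yz\in M$. Applying Theorem~\ref{thm:2connected} to the triangle $xyz$ with this decomposition gives a $(1,1,2,3)$-packing coloring $f$ with $f(x)=3$ and $f(y),f(z)\in\{1A,1B\}$.

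The distance-$3$ condition in the statement is then automatic. Every vertex within distance two of $v$ is within distance three of $x$. Since the $3$-vertices form a $3$-packing containing $x$, the only $3$-vertex within distance two of $v$ is $x$ itself. The same argument shows that $uvw$ is not a $3$-triangle, so under $f$ it is a $2$-triangle. The recoloring below never creates a new $3$-vertex, so this property survives.

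It remains to make $u$ the $2$-vertex of $uvw$ while $v$ and $w$ receive colors from $\{1A,1B\}$. Define $f'$ from $f$ by setting $f'(u)=2$ and temporarily uncoloring $v$ and $w$. If $u$ was already the $2$-vertex, nothing changes. Otherwise, check which other $2$-vertices lie within distance two of $u$. Let $u'$ be the outside neighbor of $u$ and $w'$ the outside neighbor of $w$. The candidates are $x$, which is a $3$-vertex; $u'$ and its two triangle mates; and $w'$. I would rerun Stage 2 of the algorithm with the rule ``the $2$-vertex of $uvw$ is $u$'' imposed first. Then I would show that the triangle $T'$ containing $u'$ and the triangle containing $w'$ either receive $3$ on their middle vertex, or receive their $2$ on a vertex at distance at least three from $u$. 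To do this I would use the left/right triangle neighbor bookkeeping from the proof of Theorem~\ref{thm:2connected}. By that bookkeeping, a $2$ placed at an endpoint of a triangle always faces a $3$-triangle, which forbids such a $2$ from facing $u$. Finally, every triangle still has exactly one vertex colored $2$ or $3$. So, exactly as in the proof of Theorem~\ref{thm:2connected}, the uncolored vertices induce a disjoint union of even paths and even cycles. These are then colored alternately with $1A$ and $1B$, which puts $v$ and $w$ in $\{1A,1B\}$.

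The main obstacle is this local repair. A $2$-vertex on $T'$ that is adjacent to, or at distance two from, $u$ breaks the $2$-packing condition. Fixing it may push the $2$ of $T'$ to another vertex of $T'$, or make $T'$ a $3$-triangle. The second option must not conflict with $x$: this requires checking that $T'$ is not at distance three or less from $x$, and otherwise falling back to the first option. I would first try to choose the orientation of the cycle through $uvw$, that is, which of $u,w$ plays the role of the left endpoint. The aim is that the $2$-vertex on $T'$ is the vertex farthest from $u$. Such a change propagates at most one triangle further along the cycle through $T'$ in the Stage~2 ordering.
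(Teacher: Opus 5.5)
Your plan has a genuine gap, and it comes from the choice of root, not from missing bookkeeping: rooting at $xyz$ can make the target unreachable. Suppose $u$ is a $2$-vertex. Then all three vertices of the triangle $T'$ containing $u'$ are within distance two of $u$: $u'$ at distance one, its triangle mates at distance two. So $T'$ can carry no $2$. Since $1A$ and $1B$ cover at most two of its vertices, $T'$ must contain a $3$-vertex. Your fallback of putting the $2$ of $T'$ on ``the vertex farthest from $u$'' therefore never exists. Because you have fixed $f(x)=3$, you would need a $3$ on $T'$ at distance at least four from $x$.

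This can be impossible. Take the truncated tetrahedron, i.e.\ $K_4$ with each vertex replaced by a triangle; it is $2$-edge-connected, claw-free, diamond-free and cubic. Every two triangles are joined by an edge, so $T'$ is joined to $y$ or $z$. Hence every vertex of $T'$ is within distance three of $x$: $u'$ via $v,u$, and the other two via $y$ or $z$. So no $(1,1,2,3)$-packing coloring of this graph has $x$ colored $3$ and $u$ colored $2$. No local repair of the coloring from Theorem~\ref{thm:2connected} rooted at $xyz$ can succeed. Note that the corollary never asks for $x$ to be a $3$-vertex. It only forbids $3$-vertices other than $x$ within distance two of $v$.

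The missing idea is to root the algorithm at $T'$ itself. Keep $vx\in M$, but index $C_1$ so that $T'=T_1^1$ and $uvw=T_2^1$, that is, $u=x_4^1$, $v=x_5^1$, $w=x_6^1$. Then $T'$ is a $3$-triangle, and $uvw$ is a $2$-triangle whose left triangle neighbor is a $3$-triangle. The algorithm gives the $2$ to $u$ unless Stage~1 gave it to $v$. In that case $x$ is already a $3$-vertex, and swapping the colors of $u$ and $v$ works. This is valid because $T'$ has no $2$-vertex, and $w'$ cannot be a $2$-vertex, since its left triangle neighbor $uvw$ is not a $3$-triangle. The distance-two claim is then immediate: only middle vertices receive color $3$, and $x$ is the only middle vertex within distance two of $v$.
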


\begin{proof}
Consider the triangle $uvw$. Let $e$ be the edge in $G$ incident to $v$, but not incident to $u$ or $w$. By Theorem~\ref{thm:Plesnik}, there exists a $(\mathcal{C}, M)$-decomposition of $G$ where $e \in M$. We may assume $\mathcal{C} = C_1\cup \cdots \cup C_k$, where $uvw$ is a triangle on $C_1$. We can enumerate the vertices of $C_1$ as $x_1^1x_2^1\dots x_{n_1}^1$. We can then use the algorithm from Theorem~\ref{thm:2connected} where we relabel the vertices of $C_1'$ in the following way. We set $u = x_4^1$, $v=x_5^1$, $w=x_6^1$ and relabel the remaining vertices of $C_1$ accordingly. By the algorithm from Theorem~\ref{thm:2connected}, $x_1^1x_2^1x_3^1$ is a $3$-triangle and represents the vertex $h_1^1$, meaning $x_2^1$ is assigned the color 3. If the algorithm from Theorem~\ref{thm:2connected} assigns $u$ the color $2$, then we are done. Thus, assume the algorithm from Theorem~\ref{thm:2connected} does not assign $u$ the color $2$. As the left triangle neighbor of $uvw$ is a 3-triangle, it means that $v$ has been assigned the color 2 in Stage 1. In this case, we may simply swap the colors assigned to $u$ and $v$ and the resulting coloring is indeed a $(1, 1, 2, 3)$-packing coloring of $G$. Furthermore, since the only vertices that are assigned the color 3 are of the form $x_j^1$ where $j \equiv 2\pmod{3}$, it follows that any vertex within distance two from $v$ that has color 3 must be on the triangle that contains  the neighbor of $v$ other than $u$ or $w$.

\end{proof}

\begin{figure}[h]
\begin{center}
\begin{tikzpicture}[]
\tikzstyle{vertex}=[circle, draw, inner sep=0pt, minimum size=6pt]
\tikzset{vertexStyle/.append style={rectangle}}
	\vertex (1) at (-9,0) [scale=.75, label=left:$1A$] {};
	\vertex (2) at (-9,1) [ scale=.75, label=left:$1B$] {};
	\vertex (3) at (-8, 2) [ scale=.75, label=above:$3$] {};
	\vertex (4) at (-7, 1) [scale=.75, label=right:$1A$] {};
	\vertex (5) at (-7, 0) [scale=.75, label=right:$1B$] {};
	\vertex (6) at (-8, -1) [scale=.75, label=below:$2$] {};
	\vertex (7) at (3, 0) [scale=.75, label=left:$1A$] {};
	\vertex (8) at (3, 1) [scale=.75, label=left:$1B$] {};
	\vertex (9) at (4, 2) [scale=.75, label=above:$2$] {};
	\vertex (10) at (5, 1) [scale=.75, label=right:$1A$] {};
	\vertex (11) at (5,0) [scale=.75, label=right:$1B$] {};
	\vertex (12) at (4, -1) [scale=.75, label=below:$3$] {};
	\vertex(13) at (-5, 0) [scale=.75, label=left:$1B$] {};
	\vertex(14) at (-3, -1) [scale=.75, label=below:$1A$] {};
	\vertex(15) at (-4.5, -1.5) [scale=.75, label=below:$3$] {};
	\vertex(16) at (-1, -1) [scale=.75, label = above: $\mathbf{w}$, label=below:$1B$] {};
	\vertex(17) at (1, 0) [scale=.75, label = 125: $\mathbf{u}$, label= 275:\cancel{$1A$} $2$] {};
	\vertex(18) at (.5, -1.5) [scale=.75, label = 93: $\mathbf{v}$, label=below:\cancel{$2$} $1A$] {};
	\vertex(19) at (-1, 2) [scale=.75, label=below:$1A$] {};
	\vertex(20) at (1, 1) [scale=.75, label=right:$1B$] {};
	\vertex(21) at (.5, 2.5) [scale=.75, label=above:$3$] {};
	\vertex(22) at (-5, 1) [scale=.75, label=left:$2$] {};
	\vertex(23) at (-3, 2) [scale=.75, label=below:$1B$] {};
	\vertex(24) at (-4.5, 2.5) [scale=.75, label=above:$1A$] {};
	
		\vertex(25) at (-8, 6) [scale=.75, label=left:$h_1^3$] {};
	\vertex(26) at (-8, 5) [scale=.75, label=left:$h_2^3$] {};
	\vertex(27) at (-4, 6) [scale=.75, label=above:$h_4^1$] {};
	\vertex(28) at (0, 6) [scale=.75, label=above:$h_1^1$] {};
	\vertex(29) at (0,5) [scale=.75, label=below:$h_2^1$] {};
	\vertex(30) at (-4,5) [scale=.75, label=left:$h_3^1$] {};
	\vertex(31) at (4, 6) [scale=.75, label=right:$h_1^2$] {};
	\vertex(32) at (4,5) [scale=.75, label=right:$h_2^2$] {};
		\node(A) at (-2, 7) []{$H$};
	\node(B) at (-2, -3.5)[]{$G$};

	\path
		(1) edge (2)
		(2) edge (3)
		(3) edge (4)
		(4) edge (5)
		(5) edge (6)
		(1) edge (6)
		(2) edge (4)
		(1) edge (5)
		(7) edge (8)
		(8) edge (9)
		(9) edge (10)
		(10) edge (11)
		(11) edge (12)
		(7) edge (12)
		(6) edge[bend right] (12)
		(7) edge (11)
		(8) edge (10)
		(13) edge (14)
		(13) edge (15)
		(14) edge (15)
		(14) edge (16)
		(16) edge (17)
		(17) edge (18)
		(16) edge (18)
		(19) edge (20)
		(20) edge (21)
		(19) edge (21)
		(17) edge (20)
		(22) edge (23)
		(23) edge (24)
		(22) edge (24)
		(22) edge (13)
		(23) edge (19)
		(3) edge[bend left] (24)
		(9) edge[bend right] (21)
		(15) edge[bend right] (18)
				(25) edge[bend left=20] (26)
		(25) edge[bend right=20] (26)
		(25) edge (27)
		(27) edge (28)
		(28) edge (29)
		(29) edge[bend left=20] (30)
		(29) edge[bend right=20] (30)
		(27) edge (30)
		(28) edge (31)
		(31) edge[bend left=20] (32)
		(31) edge[bend right=20] (32)
		(26) edge[bend right=20] (32)

	;
\end{tikzpicture}
\end{center}
\caption{An example of using Corollary \ref{cor:onedegree3} to color a graph $G$ that is a $2$-edge-connected diamond-free, claw-free cubic graph where $v$ has color $1A$, $u$ has color $2$, and $w$ has color $1B$.}
\label{fig:cor1}
\end{figure}

Figure \ref{fig:cor1} shows how Corollary \ref{cor:onedegree3} uses the algorithm in Theorem \ref{thm:2connected} to create a coloring such that $v$ is colored $1A$, $u$ is colored $2$, $w$ is colored $1B$, and there is at most one vertex within distance two from $v$ colored $3$.  Corollary \ref{cor:onedegree2} utilizes the algorithm in Theorem \ref{thm:2connected} with a particular type of $2$-edge-connected diamond-free, claw-free subcubic graph.

\begin{corollary}\label{cor:onedegree2} If $G$ is a $2$-edge-connected diamond-free, claw-free subcubic graph with $\delta(G)\ge 2$ and exactly one vertex with degree $2$, call it $v$, then $G$ is $(1, 1, 2, 3)$-packing colorable. Moreover, $v$ is on a triangle $uvw$ in $G$, and 
\begin{itemize}
\item there exists a $(1,1,2,3)$-packing coloring such that $v$ is assigned either $1A$ or $1B$ and no vertex within distance two from $v$ has color $3$, and 
\item there exists a $(1, 1, 2, 3)$-packing coloring such that $v$ is assigned the color $3$ and $u$ and $w$ are assigned the colors $1A$ or $1B$.
\end{itemize}

\end{corollary}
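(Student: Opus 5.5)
The plan is to run the algorithm from the proof of Theorem~\ref{thm:2connected} directly on $G$, rather than to embed $G$ in a larger cubic graph. Embedding is awkward here: joining two copies of $G$ at their degree-$2$ vertices creates a bridge, so Theorem~\ref{thm:Plesnik} would no longer apply.

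\textbf{Step 1 (structure).} Every vertex other than $v$ has degree $3$. Claw-freeness puts each such vertex on a triangle, and diamond-freeness makes these triangles pairwise vertex-disjoint. I would try to show that $v$ lies on a triangle $uvw$, so that $G$ arises from a $2$-edge-connected multigraph $H$ by replacing each vertex with a triangle, where exactly one vertex $h$ of $H$ has degree $2$ and corresponds to $uvw$.

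\emph{Expected obstacle.} I am not confident this is true as stated. Subdividing the edge joining two triangles of a cubic graph of type (iii) seems to give a degree-$2$ vertex lying on no triangle. If so, an extra hypothesis is needed, or that case must be handled separately by suppressing $v$ and coloring the resulting cubic graph. I would settle this first, and then assume $v$ lies on a triangle $uvw$.

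\textbf{Step 2 (decomposition of $G$).} Suppress $h$ in $H$ to get a $2$-edge-connected cubic multigraph $H'$, and let $e'$ be the edge of $H'$ that replaced the path through $h$. By Theorem~\ref{thm:Plesnik}, $H'$ has a $2$-factor containing $e'$; equivalently, it has a perfect matching avoiding $e'$ (apply the theorem with $e'$ as one of the two deleted edges). Subdividing $e'$ back with $h$ gives a $2$-factor of $H$ through $h$, and lifting it gives a $(\mathcal{C},M)$-decomposition of $G$. In it, $uvw$ lies on $C_1$, $uw\in M$, and $v$ is simply the vertex with no $M$-edge leaving the triangle. The algorithm of Theorem~\ref{thm:2connected} uses $v$'s outside neighbor only to justify coloring $v$ with $2$ next to a $3$-vertex, so it runs unchanged with $v=v_1^1$.

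\textbf{Step 3 (second bullet).} The algorithm makes $uvw$ a $3$-triangle with $v$ colored $3$ and $u,w$ colored from $\{1A,1B\}$. The checks in the proof of Theorem~\ref{thm:2connected} go through verbatim, since $v$ now has even fewer vertices within distance three.

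\textbf{Step 4 (first bullet).} Here I would mimic Corollary~\ref{cor:onedegree3}: relabel $C_1$ so that $u=x_4^1$, $v=x_5^1$, $w=x_6^1$ and $x_1^1x_2^1x_3^1$ is the forced $3$-triangle. The left neighbor triangle of $uvw$ is then a $3$-triangle, and $v$ cannot receive color $2$ in Stage 1 because it has no outside neighbor. Hence $uvw$ becomes a $2$-triangle with $u$ colored $2$, and $v,w$ receive colors $1A,1B$.

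The vertices within distance two of $v$ are $u$, $w$, the outside neighbors $u'$ of $u$ and $w'$ of $w$, and nothing else. Color $3$ only appears on vertices of the form $x_j^1$ with $j\equiv 2 \pmod 3$. Since $u'=x_3^1$ and $w'=x_7^1$, neither is a middle vertex, so neither is colored $3$. This is exactly where $\deg(v)=2$ improves on Corollary~\ref{cor:onedegree3}, which had to allow one $3$-vertex at $v$'s outside neighbor.
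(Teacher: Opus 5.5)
Your doubt in Step~1 is justified: the assertion that $v$ lies on a triangle is false as stated. Take the prism $K_3\square K_2$ and subdivide one of the three edges joining its two triangles by a new vertex $v$. The result is $2$-edge-connected, claw-free, diamond-free and subcubic, $v$ is its only vertex of degree $2$, and the two neighbors of $v$ are non-adjacent.

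The paper's proof does not address this. It starts by picking $a\in N_G(u)-N_G(w)$, i.e.\ it presupposes the triangle $uvw$. This is harmless in the only application, Theorem~\ref{thm:mainthm}. There $v=r_1^i$ is incident to a bridge of the claw-free cubic graph $G$, so its two neighbors in $G_i$ are adjacent. Adding this as a hypothesis, as you suggest, is therefore the right repair.

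Your fallback for plain colorability also works. Suppressing $v$ gives a $2$-edge-connected, claw-free, diamond-free cubic graph. Color it by Theorem~\ref{thm:2connected} with one neighbor $p$ of $v$ as the $3$-vertex of its triangle. Then give $v$ a color in $\{1A,1B\}$ not used on its other neighbor; this is valid because subdividing an edge does not shrink distances.

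Granting the triangle, your proof is essentially the paper's. Suppressing $h$ in $H$ is exactly the paper's $\widetilde G$: delete $u,v,w$ and add the edge $ab$ between the outside neighbors of $u$ and $w$. A $2$-factor through $e'$ is its decomposition with $ab\notin M$. The reinsertion of $uvw$ with $uw\in M$, and the relabelings $v=x_5^1$, resp.\ $v=x_2^1$, are the paper's. Your check that $a=x_3^1$ and $b=x_7^1$ are not middle vertices just spells out its remark that only vertices $v_j^i$ receive color $3$.

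One step you omit: suppression needs the two neighbors of $h$ in $H$ to be distinct, or it creates a loop. The paper proves this as $ab\notin E(G)$. Otherwise $a,b$ lie on a triangle $abr$, and the edge from $r$ to its third neighbor is a bridge.

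Also, like the paper, you only assert that $v$ is never colored $2$. Read literally, this can fail if $h$ is counted in $H_1$. The Stage~1 isolate rule colors $v_j^1$ outright, and the path rule only checks that $v_j^1$ has degree two in $J$, so both would pick $v$. Say explicitly that $u$ is chosen instead; this is valid because the left triangle neighbor $x_1^1x_2^1x_3^1$ of $uvw$ is a $3$-triangle. Alternatively, swap the colors of $u$ and $v$ afterwards, as in the proof of Corollary~\ref{cor:onedegree3}.
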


\begin{proof} Note that since $G$ is diamond-free, there exists $a \in N_G(u)-N_G(w)$ and $b \in N_G(w) - N_G(u)$. We claim that $ab\not\in E(G)$. For the sake of contradiction, assume $ab \in E(G)$. Since $G$ is claw-free, there is a vertex $r$ adjacent to both $a$ and $b$. However, the third neighbor of $r$ different from $a$ and $b$ would then be a bridge of $G$, which does not exist in $G$. Thus, $ab\not\in E(G)$. Therefore, we can create the graph $\widetilde{G}$ from $G$ by removing the vertices $\{u, v, w\}$ and adding the edge $ab$. Thus, $\widetilde{G}$ is a 2-edge-connected, claw-free, diamond-free cubic graph and we can find a $(\mathcal{C}, M)$-decomposition for $\widetilde{G}$ where $ab\not\in M$. If we assume $\mathcal{C} = C_1\cup \cdots \cup C_k$ and $ab$ is an edge on $C_1$, it follows that if we enumerate the vertices of $C_1$ as $x_1^1x_2^1\dots x_{n_1}^1$, then $a=x_j^1$ and $b=x_{j+1}^1$ where $j \equiv 0 \pmod{3}$ for $a$ is on a triangle in $\widetilde{G}$ that does not contain $b$. Therefore, $(\mathcal{C}', M')$ defined by $\mathcal{C}' = C_1' \cup C_2\cup \cdots \cup C_k$ with $C_1' = uvwx_{j+1}^1x_{j+2}^1\dots x_{n_i}^1x_1^1\dots x_j^1$ and $M' = M \cup \{uw\}$ is a decomposition for ${G}$. We can then use the algorithm from Theorem~\ref{thm:2connected} where we relabel the vertices of $C_1'$ in the following ways to achieve each case. Note that when we use this algorithm that $v$ will never be colored $2$ since it does not have a third neighbor. 
\begin{itemize}
\item If  we want $v$ to receive color $1A$ or $1B$, then we set $u = x_4^1$, $v=x_5^1$, $w=x_6^1$ and add three to the subscript for each remaining vertex on $C_1'$. Since $u = x_4^1$, it must be the case that $a = x_3^1$. By the algorithm from Theorem~\ref{thm:2connected}, $x_1^1x_2^1x_3^1$ is a $3$-triangle. Since a $3$-triangle is never adjacent to another $3$-triangle, $uvw$ must a $2$-triangle where $u$ is a 2-vertex as  the left triangle of $uvw$ is a $3$-triangle. Furthermore, the algorithm from Theorem~\ref{thm:2connected} only assigns one vertex in each triangle the color $2$ or $3$, implying $v$ and $w$ are assigned the colors $1A$ or $1B$. Also recall that the algorithm from Theorem~\ref{thm:2connected} only assigns vertices of the form $v_j^i$ the color $3$. Hence, no vertex within distance two from $v$ has color $3$ since $v$ has degree $2$. Thus, $v$ is assigned either $1A$ or $1B$ and no vertex within distance two from $v$ has color $3$.
\item If we want $v$ to receive color $3$, then we set $u=x_1^1$, $v=x_2^1$, $w=x_3^1$, and add three to the subscript for each remaining vertex on $C_1'$. Then $v$ will be colored $3$ by the algorithm from Theorem~\ref{thm:2connected}. Note that $uvw$ are on the same triangle. Again, since the algorithm from Theorem~\ref{thm:2connected} only assigns one vertex in each triangle the color $2$ or $3$, $v$ will be assigned the color $3$ and $u$ and $w$ will be assigned the colors $1A$ or $1B$.
\end{itemize}
\end{proof}

\begin{figure}[h]
\begin{center}
\begin{tikzpicture}[]
\tikzstyle{vertex}=[circle, draw, inner sep=0pt, minimum size=6pt]
\tikzset{vertexStyle/.append style={rectangle}}
	
	\vertex (7) at (5, 1) [scale=.75, label = above: $\mathbf{v}$, label=right:$1A$] {};
	\vertex (8) at (3, 1) [scale=.75, label = above: $\mathbf{w}$, label=left:$1B$] {};
	\vertex (9) at (4, 2) [scale=.75, label = above: $\mathbf{u}$, label=right:$2$] {};

	\vertex(14) at (-3, 0) [scale=.75, label=left:$2$] {};
	\vertex(16) at (-1, 0) [scale=.75, label=right:$1B$] {};
	\vertex(17) at (-2, -1) [scale=.75,  label= below: $1A$] {};
	\vertex(19) at (-3, 1) [scale=.75, label=left:$3$] {};
	\vertex(20) at (-1, 1) [scale=.75, label=right:$1A$] {};
	\vertex(21) at (-2, 2) [scale=.75, label=above:$1B$] {};

	\path

		(7) edge (8)
		(8) edge (9)
		(7) edge (9)

		(19) edge (20)
		(20) edge (21)
		(19) edge (21)
		(8) edge[bend left] (17)
		(14) edge (17)
		(16) edge (17)
		(14) edge (16)
		(19) edge (14)
		(20) edge (16)
		
		(21) edge[bend left] (9)
		
	;
\end{tikzpicture}
\end{center}
\caption{An example using Corollary~\ref{cor:onedegree2} to color a graph $G$ that is a $2$-edge-connected diamond-free, claw-free subcubic graph with $\delta(G)\ge 2$ and exactly one vertex with degree $2$ that is colored $1A$ and is not within distance two from a color $3$ vertex.}
\label{fig:cor2}
\end{figure}

\begin{figure}[h]
\begin{center}
\begin{tikzpicture}[]
\tikzstyle{vertex}=[circle, draw, inner sep=0pt, minimum size=6pt]
\tikzset{vertexStyle/.append style={rectangle}}
	
	\vertex (7) at (5, 1) [scale=.75, label = above: $\mathbf{v}$, label=right:$3$] {};
	\vertex (8) at (3, 1) [scale=.75, label = above: $\mathbf{w}$, label=left:$1B$] {};
	\vertex (9) at (4, 2) [scale=.75, label = above: $\mathbf{u}$, label=right:$1A$] {};

	\vertex(14) at (-3, 0) [scale=.75, label=left:$1A$] {};
	\vertex(16) at (-1, 0) [scale=.75, label=right:$1B$] {};
	\vertex(17) at (-2, -1) [scale=.75,  label= below: $2$] {};
	\vertex(19) at (-3, 1) [scale=.75, label=left:$1B$] {};
	\vertex(20) at (-1, 1) [scale=.75, label=right:$1A$] {};
	\vertex(21) at (-2, 2) [scale=.75, label=above:$2$] {};

	\path

		(7) edge (8)
		(8) edge (9)
		(7) edge (9)

		(19) edge (20)
		(20) edge (21)
		(19) edge (21)
		(8) edge[bend left] (17)
		(14) edge (17)
		(16) edge (17)
		(14) edge (16)
		(19) edge (14)
		(20) edge (16)
		
		(21) edge[bend left] (9)
		
	;
\end{tikzpicture}
\end{center}
\caption{An example using Corollary~ \ref{cor:onedegree2} to color a graph $G$ that is a $2$-edge-connected diamond-free, claw-free subcubic graph with $\delta(G)\ge 2$ and exactly one vertex with degree $2$ that is colored $3$.}
\label{fig:cor3}
\end{figure}

Figure~\ref{fig:cor2} gives an example of using Corollary~\ref{cor:onedegree2} to color a graph $G$ that is $2$-edge-connected diamond-free, claw-free subcubic graph with $\delta(G)\ge 2$ and exactly one vertex $v$ with degree $2$, where $v$ is assigned the color $1A$ and no vertex within distance two from $v$ has color $3$. Figure~\ref{fig:cor3} gives an example of using Corollary~\ref{cor:onedegree2}  to color a graph $G$ that is a $2$-edge-connected diamond-free, claw-free subcubic graph with $\delta(G)\ge 2$ and exactly one vertex $v$ with degree $2$, where $v$ is assigned the color $3$. The following statements are true for all of the colorings given in Theorem~\ref{thm:2connected}, Corollary~\ref{cor:onedegree3}, and Corollary~\ref{cor:onedegree2}.

\begin{itemize}
\item Only one vertex from each triangle is assigned the color $2$ or $3$. The other two vertices are assigned the colors $1A$ or $1B$.
\item The only vertices assigned the color $3$ are of the form $v_j^i$, and no $3$-triangle is adjacent to another $3$-triangle.
\item A vertex assigned the color 2 has a neighbor which is on a $3$-triangle. 
\end{itemize}

Next, we extend our $(1, 1, 2, 3)$-packing coloring to a diamond-free, claw-free cubic graph with bridges. Figure \ref{fig:thm2} gives an example of this $(1, 1, 2, 3)$-packing coloring.

\begin{theorem} If $G$ is a diamond-free, claw-free cubic graph, then $G$ is $(1, 1, 2, 3)$-packing colorable.
\label{thm:mainthm}
\end{theorem}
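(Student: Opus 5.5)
We plan to reduce to the bridgeless case via the bridge-block structure of $G$. We may assume $G$ is connected. If $G$ has no bridge, Theorem~\ref{thm:2connected} applies directly, so assume $G$ has bridges.

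\textbf{Step 1: structure of bridges.} Since $G$ is claw-free and cubic, every vertex lies on a triangle. A bridge $xy$ cannot lie on a triangle, so the other two neighbors of $x$ are adjacent, and $x$ lies on a triangle $xx'x''$. Deleting all bridges therefore leaves components $B_1,\dots,B_m$ (the $2$-edge-connected blocks). Each $B_i$ is a $2$-edge-connected diamond-free, claw-free subcubic graph with $\delta\ge 2$. Its degree-$2$ vertices are exactly its bridge endpoints, and each lies on a triangle of $B_i$.

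The bridge tree $\mathcal{T}$, whose nodes are the blocks and whose edges are the bridges, is a tree. We root it at a block and process it top-down. For a block with more than one degree-$2$ vertex, we first want to reduce it to the cases of Corollary~\ref{cor:onedegree3} and Corollary~\ref{cor:onedegree2}. For each degree-$2$ vertex $v$ on a triangle $uvw$ of $B_i$, we apply the trick from the proof of Corollary~\ref{cor:onedegree2}: contract or suppress appropriately, or pair up degree-$2$ vertices. The cleaner route is to note that, in the coloring produced by the algorithm of Theorem~\ref{thm:2connected}, a degree-$2$ vertex $v$ behaves like $v_j^i$ with no $M$-edge. So we can run the algorithm on $B_i$ with $(\mathcal{C},M)$ built by replacing each such triangle as in Corollary~\ref{cor:onedegree2}. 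For each triangle containing a bridge endpoint, we then either force its $v$ to be color $3$ or force it to get $1A$/$1B$ with no $3$ within distance two, as needed.

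\textbf{Step 2: gluing across a bridge.} Let $xy$ be a bridge with $x$ in the parent block, already colored, and $y$ in the child block. The constraints are:
\begin{itemize}
\item if $x$ has color $3$, then no vertex within distance $2$ of $y$ in the child may be $3$, and $y$ must not be $2$;
\item if $x$ is $1$-colored, we use the freedom to permute $1A\leftrightarrow 1B$ in the child so that $y$'s color differs;
\item if $x$'s side has a $3$ within distance two of $x$, then $y$ and $y$'s triangle must avoid $3$.
\end{itemize}
By the stated invariants, a $3$ always sits at a triangle's ``$v$'' position, and $2$'s have a neighbor on a $3$-triangle. So the child should receive the coloring from the first bullet of Corollary~\ref{cor:onedegree2}: $y$ is colored $1A$/$1B$ and no $3$ lies within distance two of $y$. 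The only exception is when the parent side has no $3$ within distance two of $x$ and $x$ is not $3$; there we may alternatively use the second bullet, coloring $y$ with $3$.

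We also need to check distances of $2$-vertices across the bridge. If $y$ is $1$-colored and $y$'s triangle mates are colored $2$ (such as $u$), then distance from $u$ to $x$'s neighbors is at least $3$ whenever $x$ is not $2$. If $x$ is $2$, we must ensure $u$ is not $2$, and we choose between $u$ and $w$ using the symmetry of the first bullet (left/right orientation of $C_1'$).

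\textbf{Main obstacle.} The hard step is handling a block with several bridge endpoints. Here we must simultaneously prescribe the local pattern at each degree-$2$ triangle while the algorithm of Theorem~\ref{thm:2connected} only guarantees control at one root triangle. The plan is to make every degree-$2$ vertex a ``$v$-vertex'' with no $M$-partner; the algorithm then never colors it $2$, colors it $3$ only if it is a root or an isolated choice, and otherwise colors it $1$. We will then verify that the $3$-packing condition across bridges holds by requiring each child's endpoint to be non-$3$ with no $3$ within distance $2$, which Corollary~\ref{cor:onedegree2}'s first bullet provides. If multiple endpoints make this fail, we fall back on a different tactic: pick a leaf block of $\mathcal{T}$ (exactly one degree-$2$ vertex), color the rest of $G$ by induction after replacing the leaf block with a suitable small gadget, and extend using Corollary~\ref{cor:onedegree2}.
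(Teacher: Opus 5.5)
\textbf{There is a genuine gap, exactly where you place the main obstacle.} Your proposal gives no working way to colour a block with several bridge endpoints.

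Your ``cleaner route'' runs the algorithm of Theorem~\ref{thm:2connected} with every degree-$2$ vertex as a $v$-vertex without an $M$-edge. That needs one $(\mathcal{C},M)$-decomposition of the reduced graph whose $2$-factor contains all the edges $ab$ from the proof of Corollary~\ref{cor:onedegree2} at once. Theorem~\ref{thm:Plesnik} only lets you prescribe two edges, and the route really fails. Replace each matching edge $a_kb_k$ of the triangular prism by a path $a_ku_kw_kb_k$, plus a vertex $v_k$ adjacent to $u_k$ and $w_k$. The reduced graph is the prism, and the three edges $a_kb_k$ form a $3$-edge-cut. A $2$-factor meets every edge cut in an even number of edges, so no $2$-factor contains all three.

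There are further gaps in the same step:
\begin{itemize}
\item The first bullet of Corollary~\ref{cor:onedegree2} requires exactly one degree-$2$ vertex, so you cannot invoke it for a child block that has children of its own.
\item The fallback ``suitable small gadget'' is never specified.
\item Blocks isomorphic to $K_3$ are not treated at all.
\end{itemize}

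You mention pairing up degree-$2$ vertices in passing and then abandon it, but that is the paper's key idea. In each block $G_i$ the paper adds edges $r^i_\alpha r^i_{\alpha+1}$ so that at most the parent-attachment vertex $r_1^i$ keeps degree two. It checks that $\widetilde{G_i}$ is still $2$-edge-connected, claw-free and diamond-free. It then only needs control at the triangle of $r_1^i$, which Corollaries~\ref{cor:onedegree3} and~\ref{cor:onedegree2} and Theorem~\ref{thm:2connected} provide. A $3$ that Corollary~\ref{cor:onedegree3} may place on the partner $g'$ of $r_1^i$ is harmless, because $r_1^ig'\notin E(G)$ and $g'$ is not within distance two of $r_1^i$ in $G_i$. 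Nothing is prescribed at the other bridge endpoints. Each child later chooses its colouring from the colour of its own attachment vertex $z$, with an explicit table when the child is $K_3$.

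\textbf{Your gluing rule when the parent endpoint $x$ has colour $2$ is wrong.} Both $u$ and $w$ are adjacent to $y$, hence at distance two from $x$, so exchanging their roles cannot help. The triangle at $y$ must contain a $2$ or a $3$, so you are forced to put a $3$ on it; the paper puts it on $y=r_1^i$. That requires no $3$-vertex of the parent within distance two of $x$, which you never establish.

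This case is delicate even in the paper. Its final check asserts ${\rm dist}(u,v)\ge 4$ whenever $T^u$ avoids the bridge endpoint. But suppose $z=u_j$ received colour $2$ in Stage~2 of Theorem~\ref{thm:2connected}, and the $M$-partner of $v_j$, lying on a later cycle, subsequently receives colour $3$. That $3$-vertex is then at distance two from $z$ in $G$ and at distance three from $r_1^i$. So this case needs an additional argument whichever write-up you follow.
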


\begin{proof}
We may assume that $G$ is connected and contains bridges. We let $B(G)$ be the set of bridges, meaning that we can write $G- B(G) = G_1\cup \cdots \cup G_k$ where each $G_i$ is a 2-edge-connected, diamond-free, claw-free subcubic component. Note that since $G$ is claw-free, for every vertex $v \in G$, there exists a $G_i$ such that $v$ has at least two neighbors in $G_i$. Also, $|V(G_i)| \geq 3$ for all $i \in [k]$. We associate with $G$ a tree $T_G$ with vertex set $T_G = \{v_1, \dots, v_k\}$ such that $v_iv_j \in E(T_G)$ if and only if there is a bridge in $G$ between a vertex in $G_i$ and a vertex in $G_j$. We shall root $T_G$ at $v_1$ and refer to $v_j \in V(T_G)$ as being at level $i$ if ${\rm{dist}}_{T_G}(v_1, v_j) = i$. We will also say that $v_1$ is at level $0$. 

For each $i\in [k]$, we create a supergraph $\widetilde{G_i}$ of $G_i$ as follows. Let $\{r_1^i, \dots, r_{t_i}^i\}$  be the vertices of degree two in $G_i$. Then $r_j^i$ has two neighbors in $G_i$, and those neighbors are adjacent since $G$ is claw-free and $r_j^i$ is incident to a bridge. That is, $r_j^i$ is on a triangle in $G_i$. Set $\widetilde{G_i} = G_i$ if every vertex in $G_i$ has degree two (which can only happen if $G_i \cong K_3$). Otherwise, if $t_i$ is even, then we obtain $\widetilde{G_i}$ from $G_i$ by adding the edges $r_{\alpha}^ir_{\alpha+1}^i$ for odd $\alpha\in[t_i]$. If $t_i=1$, then we set $\widetilde{G_i} = G_i$. If $t_i \ge 3$ is odd, then we obtain $\widetilde{G_i}$ from $G_i$ by adding the edges $r_{\alpha}^ir_{\alpha+1}^i$ for even $\alpha\in [t_i]$. Note that in each case, $\widetilde{G_i}$ is a 2-edge-connected, claw-free, subcubic graph with at most one vertex with degree two. We claim that $\widetilde{G_i}$ is also diamond-free. Suppose to the contrary that $\widetilde{G_i}$ contains a diamond with vertices $r_{\alpha}^i, r_{\alpha+1}^i, x$, and $y$. From above, we may assume $r_{\alpha}^i$ is on a triangle with $x$ and $y$. Since $r_{\alpha}^i$ is also adjacent to $r_{\alpha + 1}^i$, then $r_{\alpha}^i$ must be an interior vertex of a diamond. Since $x$ and $y$ are also adjacent, $r_{\alpha+1}^i$ is an exterior vertex. Then, $r_{\alpha}^i$ and $r_{\alpha+1}^i$ share a common neighbor, say $x$. However, $r_{\alpha+1}^i$ must have another neighbor in $G_i$, say $z$. It follows that $z$ and $x$ in $G_i$ must be adjacent; otherwise, $N_G[r_{\alpha + 1}^i]$ is a claw. However, this implies $x$ has degree $4$, which is a contradiction. Thus, $\widetilde{G_i}$ is indeed diamond-free. 

We proceed to color the vertices $G_i$ for each $i\in [k]$ based on the level of its corresponding vertex $v_i$ in $T_G$. We start with $G_1$ (since $v_1$ is the only vertex at level $0$).  By Corollary~\ref{cor:onedegree3} and Corollary~\ref{cor:onedegree2}, there exists a $(1, 1, 2, 3)$-packing coloring of $\widetilde{G_1}$. It follows that this coloring of the vertices of $\widetilde{G_1}$ is also a $(1, 1, 2, 3)$-packing coloring of $G_1$. 
Now set the current level to $\ell=1$. For each vertex $v_i \in T_G$ at level $\ell$, we pick $r_1^i$ to be the unique vertex in $G_i$ with degree two that has a neighbor in $G_s$ where the corresponding vertex $v_s \in T_G$ has level $\ell-1$.  That is, when we enumerate the vertices of degree two in $G_i$ as $\{r_1^i, \dots, r_{t_i}^i\}$, we shall assume that $ r_1^i$ has a neighbor on a component of $G- B(G)$ ``one level up". Moreover, we set $v = r_1^i$.  We will also assume that the neighbor of $r_1^i$ not in $G_i$ is $z$, which has already received a color and is contained in $G_s$. We color the vertices of $\widetilde{G_i}$ depending on how we constructed $\widetilde{G_i}$.
\vskip5mm
\noindent\textbf{Case 1:} Assume $\widetilde{G_i} = G_i \cong K_3$.
\vskip2mm

Note that the $G_i = r_1^ir_2^ir_3^i$.  If $z$ has color $1A$, assign $r_1^i$ the color $1B$, $r_2^i$ color $1A$ and $r_3^i$ color $2$. Note that $r_3^i$ is distance at least three from any vertex in $G_s$ with color $2$. Similarly, if $z$ has color $1B$, then assign $r_1^i$ the color $1A$, $r_2^i$ the color $1B$, and $r_3^i$ the color $2$. If $z$ has color $3$, then assign $r_1^i$ the color $2$, $r_2^i$ the color $1B$, and $r_3^i$ the color $1A$. Finally, if $z$ has color $2$, then assign $r_1^i$ the color $3$, $r_2^i$ the color $1A$, and $r_3^i$ the color $1B$. Note that in each case, we have assigned  exactly one vertex a color from $\{2, 3\}$. Therefore, if $r_1^i$ is assigned 2 and $G_s\cong K_3$, then no vertex in $G_s$ is assigned the color $2$. On the other hand, if $r_1^i$ is assigned the color $3$ and $G_s \cong K_3$, then $z$ is assigned the color $2$ and this only occurs if $G_s = u^sv^sz$ where $v^s$ is adjacent to some vertex $p$ on $G_{s'}$ where $v_{s'}$ in $T_G$ is at level $\ell-2$ and $p$ is assigned color $1A$ or $1B$.

\vskip5mm
\noindent\textbf{Case 2:} Assume $\widetilde{G_i}\not\cong K_3$.
\vskip2mm
Let $u$ and $w$ be the other two vertices in $G_i$ that $r_1^i$ is adjacent to. Suppose first $z$ has color $1A$ or 3. By Corollary~\ref{cor:onedegree3} and Corollary~\ref{cor:onedegree2}, there is a $(1, 1, 2, 3)$-packing coloring $c_i$ of $\widetilde{G_i}$ where $r_1^i$ and $w$ receives the color $1A$ or $1B$, and $u$ receives the color $2$. If $r_1^i$ has degree two in $\widetilde{G_i}$, then no vertex within distance two of $r_1^i$ in $\widetilde{G_i}$ has color $3$ by Corollary~\ref{cor:onedegree2}. It follows that  no vertex in $G_i$ within distance two of $r_1^i$ has color $3$. Next, assume $r_1^i$ has degree $3$ in $\widetilde{G_i}$. By Corollary~\ref{cor:onedegree3}, there is at most one vertex within distance two of $r_1^i$ in $\widetilde{G_i}$ that has color $3$. In addition, by Corollary~\ref{cor:onedegree3}, if this $3$-vertex exists, then it is the third neighbor of $r_1^i$ in $\widetilde{G_i}$ outside of the triangle $ur_1^iw$. Then call this vertex $g'$. Based on the construction of $\widetilde{G_i}$, the edge $r_1^ig'$ was added to $G_i$ to construct $\widetilde{G_i}$. Thus, $g'$  is not within distance two of $r_1^i$ in $G_i$. Therefore, in both cases, no vertex in $G_i$ is within distance two of $r_1^i$ that has color $3$. If this coloring assigns $r_1^i$ the color $1A$, then simply recolor every vertex assigned $1A$ with $1B$ and every vertex with $1B$ the color $1A$. Note that any vertex that has received color $2$ in $G_i$ is distance at least three from a vertex in $G_s$ that has received the color $2$. Similarly, since there is no vertex in $G_i$ within distance two of $r_1^i$ that has color $3$, any vertex with color $3$ in $G_i$ has distance at least four from any vertex in $G_s$ that has color $3$. Note further that if we had assumed that $z$ has color $1B$, we could analogously find a coloring for $G_i$. 

Finally, suppose $z$ has color $2$. Since $G$ is claw-free, $z$ is on a triangle in $\widetilde{G_s}$ and $z$ has degree two in $G_s$. By Corollary~\ref{cor:onedegree3} and Corollary~\ref{cor:onedegree2}, there is a $(1, 1, 2, 3)$-coloring $c_i$ of $\widetilde{G_i}$ where $r_1^i$ receives the color 3 and the neighbors of $r_1^i$ in $\widetilde{G_i}$ receive color $1A$ or $1B$.

\vskip5mm
We shall assume now that the above coloring process is completed and all vertices of $G$ have received a color. It is clear at each stage that the vertices that receive color 1A (or 1B) is an independent set in $G$. It is also clear that for any pair $u,v \in V(G_i)$ with color 2 (resp., 3) that ${\rm dist}_G(u,v) \ge 3$ (resp.,  ${\rm dist}_G(u,v) \ge 4$). All that remains is to show that if $u \in G_i$ and $v \in G_j$ where $i\ne j$ receive color 2 (resp., 3), then ${\rm dist}_G(u,v) \ge 3$ (resp., ${\rm dist}_G(u,v) \ge 4$). 

Suppose first that $u$ and $v$ each receive color $2$, and  ${\rm dist}_G(u,v) \le2$. We may assume that either $u$ has degree two in $G_i$ or $v$ has degree two in $G_j$ for otherwise ${\rm dist}_G(u,v) \ge 3$. Without loss of generality, assume $u$ has degree two in $G_i$. This implies that we may assume that $v$ is on the triangle in $G_j$ containing a neighbor of $u$. Suppose first that the level of $v_i$ in $T_G$ is less than the level of $v_j$ in $T_G$. Thus, $r_1^j$ is adjacent to $u$ and $v$ is on the same triangle in $G_j$ as $r_1^j$.  Regardless of whether $G_j$ is isomorphic to $K_3$ or not, $r_1^j$ is assigned 3 making the triangle containing $v$ a 3-triangle, contradicting the assumption that $v$ has color 2. Therefore, we shall assume that the level of $v_i$ in $T_G$ is greater than the level of $v_j$ in $T_G$. It follows that $u = r_1^i$, and we may assume that $z$ is the neighbor of $u$ in $G_j$. This implies that $G_i \cong K_3$ for otherwise $u$ would not receive color 2. Moreover, $z$ was first assigned color 3 as the vertices of $G_j$ are colored before $G_i$. However, this implies that the triangle in $G_j$ containing $v$ is a 3-triangle, which again is a contradiction. 

Lastly, suppose that $u$ and $v$ each receive color 3, and ${\rm dist}_G(u,v) \le 3$. Let $T^u$ be the triangle containing $u$ in $G_i$ and $T^v$ be the triangle containing $v$ in $G_j$. Without loss of generality, we shall assume that the level of $v_i$ in $T_G$ is less than or equal to the level of $v_j$ in $T_G$. 

\begin{itemize}
\item Suppose that $G_i \cong K_3$. Thus, $G_i = r_1^ir_2^ir_3^i$ where $u=r_1^i$ (based on the coloring process in Case 1). It follows that $r_2^i$ and $r_3^i$ each receive a color from $\{1A, 1B\}$. Suppose first that $v_i$ and $v_j$ are at the same level in $T_G$. This means that $u$ is adjacent to some vertex in $G_s$, call it $z$,  and $v$ is adjacent to some vertex in $G_s$, call it $z'$, where the level of $v_s$ in $T_G$ is less than the level of $v_i$ or $v_j$. Note that $z\ne z'$ for otherwise $z$ is the center of claw. Moreover, $zz' \in E(G_s)$ for otherwise ${\rm dist} (u,v) \ge 4$. This implies that $G_s \cong K_3$. By Case 1, since $u$ is assigned color 3, it must be that $z$ was assigned color 2. Thus, $z'$ has a color from $\{1A, 1B\}$. However, by Case 1 or 2, $v$ should have been assigned a color from $\{1A, 1B\}$, which is a contradiction. Thus, we shall assume that the level of $v_i$ in $T_G$ is strictly less than the level of $v_j$ in $T_G$. This implies that $r_1^j$ is on $T^v$ and is adjacent to either $r_2^i$ or $r_3^i$.  Regardless of whether $G_j$ is isomorphic to $K_3$ or not, $r_1^j$ receives a color from $\{1A, 1B\}$ in Case 1 and 2. Furthermore, $T^v$ becomes a 2-triangle, contradicting that $v$ has color 3. 
\item Suppose  that $G_i \not\cong K_3$. Suppose first that there is a bridge between $G_i$ and $G_j$. If $T^u$ does not contain the vertex of $G_i$ incident to the bridge between $G_i$ and $G_j$, then ${\rm{dist}}(u,v)\ge 4$ which is a contradiction. Thus, we may assume that $T^u$ does contain the vertex of $G_i$ incident to the bridge between $G_i$ and $G_j$; in particular, $u$ is incident to the bridge between $G_i$ and $G_j$. Suppose first that $G_j \not\cong K_3$. It follows by Case 2 that $r_1^j$ receives a color from $\{1A, 1B\}$ and is not within distance two of another vertex in $G_j$ with color 3, contradicting that  ${\rm dist}_G(u,v) \le 3$. Therefore, we shall assume that $G_j \cong K_3$. By Case 1, $G_j$ becomes a 2-triangle, another contradiction. Thus, we shall assume that the level of $v_i$ in $T_G$ is $\ell$ and the level of $v_j$ in $T_G$ is $\ell + 2$. It follows that there exists $G_s$ containing vertices $z$ and $z'$ such that $uz \in E(G)$, $r_1^jz' \in E(G)$ and $v_s$ is at level $\ell +1 $ in $T_G$. Since ${\rm dist}_G(u,v) \le 3$, $G_s \cong K_3$ and $r_1^j = v$. By Case 1, $r_1^s$ is assigned the color 2 and each of $\{r_2^s, r_3^s\}$ are assigned a color from $\{1A, 1B\}$. Regardless of whether $G_j$ is isomorphic to $K_3$, by Case 1 and 2 $r_1^j$ is assigned a color from $\{1A, 1B\}$, another contradiction. 
\end{itemize}
Having exhausted all possibilities, we may conclude that we have a $(1, 1, 2, 3)$-packing coloring of $G$.

\end{proof}

\begin{figure}[h]
\begin{center}
\begin{tikzpicture}[]
\tikzstyle{vertex}=[circle, draw, inner sep=0pt, minimum size=6pt]
\tikzset{vertexStyle/.append style={rectangle}}
	\vertex(13) at (-5, 0) [scale=.75, label=left:$2$] {};
	\vertex(14) at (-3, -1) [scale=.75, label=below:$1B$] {};
	\vertex(15) at (-4.5, -1.5) [scale=.75, label=below:$1A$] {};
	\vertex(16) at (-1, -1) [scale=.75,  label=below:$1A$] {};
	\vertex(17) at (1, 0) [scale=.75, label= right: $2$] {};
	\vertex(18) at (.5, -1.5) [scale=.75, label=below:$1B$] {};
	\vertex(19) at (-3, 1) [scale=.75, label=below:$1B$] {};
	\vertex(20) at (-1, 1) [scale=.75, label=below:$1A$] {};
	\vertex(21) at (-2, 2) [scale=.75, label=above:$3$] {};

	\vertex(22) at (4, 0) [scale=.75, label=left:$1A$] {};
	\vertex(23) at (6, -1) [scale=.75, label=below:$1B$] {};
	\vertex(24) at (4.5, -1.5) [scale=.75, label=below:$3$] {};
	\vertex(25) at (8, -1) [scale=.75,  label=below:$1A$] {};
	\vertex(26) at (10, 0) [scale=.75, label= right: $1B$] {};
	\vertex(27) at (9.5, -1.5) [scale=.75, label=below:$2$] {};
	\vertex(28) at (6, 1) [scale=.75, label=below:$2$] {};
	\vertex(29) at (8, 1) [scale=.75, label=below:$1B$] {};
	\vertex(30) at (7, 2) [scale=.75, label=above:$1A$] {};

	\path
				(13) edge (14)
		(13) edge (15)
		(14) edge (15)
		(14) edge (16)
		(16) edge (17)
		(17) edge (18)
		(16) edge (18)
		(19) edge (20)
		(20) edge (21)
		(19) edge (21)
		(17) edge (20)
		(13) edge (19)
		(22) edge (23)
		(22) edge (24)
		(23) edge (24)
		(22) edge (28)
		(25) edge (26)
		(25) edge (27) 
		(26) edge (27)
		(26) edge (29)
		(28) edge (29)
		(28) edge (30)
		(29) edge (30)
		(23) edge (25)
	
		(21) edge[bend left] (30)
		(15) edge[bend right] (18)
		(24) edge[bend right] (27)

	;
\end{tikzpicture}
\end{center}
\caption{An example of using Theorem \ref{thm:mainthm} to assign a $(1, 1, 2, 3)$ coloring to a diamond-free, claw-free cubic graph.}
\label{fig:thm2}
\end{figure}

\section*{Acknowledgements}
K.K. would like to thank the financial support provided by an AMS-Simons Research Enhancement Grant for Primarily Undergraduate Institution Faculty.

\end{document}